\def	\Var	{\mathop{\rm Var}}
\def	\Cov	{\mathop{\rm Cov}}
\shorttitle{Counting Vertices in a Voter-type Model}
\begin{document}


\title{Counting Vertices in a Voter-type Model via Stein's Method}

\author[Concordia University College of Alberta]{Radoslav Marinov}
\address{Concordia University College of Alberta, 7128 Ada Boulevard, Edmonton AB T5B 4E4, Canada \\
radoslav.marinov@concordia.ab.ca}

\begin{abstract}

The Neighborhood Attack model is a Voter type model, which takes a finite graph, assigns $1$'s and $-1$'s to its nodes (vertices), and then runs a Markov chain on the graph by uniformly at random picking a node at every turn, and then switching the values of the node and its neighbors to $1$'s or $-1$'s according to a (not necessarily fair) coin toss. We show, via a Stein's method argument, that for certain (highly symmetric) families of graphs the number of 1's in the Neighbourhood Attack Voter-type model is asymptotically normally distributed as the number of nodes tends to infinity.

\end{abstract}

\keywords{Stein's method; Markov chains; Voter models; Neighborhood Attack model; interacting particle systems; bounds of convergence} 

\ams{05C81}{60J05; 82B20}

\section{Introduction and background}


In this paper, we seek to apply Stein's method -- a technique for obtaining convergence (often CLT-type) results for random variables -- on a vertex-count in the Neighborhood Attack Voter-type model.

Voter models are interacting-particle-system models on finite graphs. The original Voter model (introduced independently in the 1970s by Clifford and Sudbury in 1973, and by Holley and Liggett in 1975, as mentioned in \cite{Liggett2005}) can be formulated as follows: Take a connected, $r$-regular (each vertex has $r$ edges) graph of size $n$. Assign $1$'s and $-1$'s to the nodes of the graph. Run a Markov chain on the graph with the following transition procedure: each turn, pick a node at random (under some distribution; usually we take the uniform), pick one of its neighbors at random (usually uniformly), and switch the value of the selected neighbor-node to the value of the originally selected node. Under uniformity of node and neighbor selection, this chain converges to one of two absorbing states, in which all nodes have the same values.

The ``Anti-voter'' model, introduced in \cite{Matloff1977}, has the selected neighbour node adopt a value opposite to that of the originally selected node. Under uniformity (again, of node and neighbor selection) the resulting chain has a stationary distribution.

Persi Diaconis and Christos Athanasiadis in \cite{AthaDiac2010} proposed the following variation of the Voter model: upon selecting a node, instead of picking one of its neighbors, flip a coin (with weight $p$, perhaps taken to be a half), and, according to the result of the cointoss, assign either $1$ or $-1$ to the selected nodes and all its neighbors. The model has been labeled the ``Neighborhood Attack'' model.

Stein's method (first introduced in \cite{Stein1972}) provides an infrastructure for the estimation of the distances between certain classes of random variables and certain (usually classical) distributions, most notably the Gaussian and the Poisson distributions. For practical purposes, we can break Stein's method into three key steps: First, one has to use Stein's identities to establish a bound on the distance between a class of random variables and a specific distribution expected to be close to the given class; second, one has to satisfy the conditions generated in the preceding step; and third, one has to evaluate the acquired bound. The last step typically involves something along the lines of reducing an expression involving a function of the variance of the given random variable.

In \cite{RR1997}, Yosef Rinott and Vladimir Rotar show, using a Stein's method argument, that the sum of the values of the nodes in the Anti-voter model at stationarity is asymptotically normally distributed. The problem Rinnott and Rotar tackled was posed by Aldous and Fill in a book that touches on Voter models, \cite{AldousFillDraftbook}. Our goal in the present article is to show that the sum of the values of the nodes in the Neighborhood Attack model is asymptotically normally distributed, using Stein's method techniques different from the ones employed by Rinott and Rotar.


For an application of the Stein technique in a different context, see the paper \cite{Fulman2004}, in which Jason Fulman shows that the number of descents or inversions in permutations complies to a central limit theorem. Both the current problem and the one examined in \cite{Fulman2004} can be viewed as random walks on hyperplanes; and hence there is a structural similarity between the approach adopted here, and the one in \cite{Fulman2004}.

For more results on the Neighbourhood Attack model, see \cite{AthaDiac2010}, \cite{ChungGra2012}. The former paper introduces the model and presents some results on random walks on hyperplane arrangements. The latter paper studies some properties of the distributions of the implicit Markov chains in models similar to the Neighbourhood Attack model.

For more on Stein's method, see \cite{ChenGoldShao2011}, \cite{Barbour1992}, \cite{Ross2011}. The first two books provide a comprehensive overview of Stein's method in regard to its applications to Normal and Poisson approximations reflexively. The monograph \cite{Ross2011} is an up-to-date survey of Stein's method literature and a useful entry-level source on the subject.


In Section \ref{SectionProblemApproach}, we pose our problem. In Section \ref{SectionSM}, we conduct a brief overview of our main technique: Stein's method. In Section \ref{SectionResults}, we introduce a few definitions and assumptions, and then list the main result of the paper. In the Section \ref{SectionProof}, we provide calculations and proofs for the result. Section \ref{SectionConsequences} interprets the result with some examples of its applicability. We draw conclusions in Section \ref{SectionConclusions}.

\section{Problem and Approach}\label{SectionProblemApproach}
 
We apply the Neighbourhood Attack model (introduced in \cite{AthaDiac2010}) on a given family of (finite) graphs. Randomly assign either $1$ or $-1$ to each node of the graph. As mentioned above, the model does the following each turn:
\begin{itemize}
\item Selects a node uniformly at random.
\item Turns the node and all its immediate neighbours into $1$'s or $-1$'s according to a Bernoulli($p$) distribution with $0<p<1$; we want $p=1/2$ for the sake of symmetry.
\end{itemize}
Given: 1) a connected graph; 2) positive probability of selection for all nodes; and 3) positive probabilities of turning into $1$ or $-1$ for the selected node and its neighbours, the underlying Markov chain, the states of which are the possible permutations of $1$'s and $-1$'s, is irreducible and everywhere recurrent on an essential class of its state space, and therefore possesses a stationary distribution. Assume the considered Markov chain begins at this stationary distribution.

Let $X$ be the number of $1$'s at stationarity. Then $N-X$ equals the number of $-1$'s, where $N$ is the number of nodes.

We want to use Stein's method to show that
\begin{equation*}
\frac{X - \mathbb{E} X}{\sigma_{X}} \xrightarrow[N\rightarrow\infty]{} \mathcal{Z}
\end{equation*}
where $\mathcal{Z}$ is of the standard normal distribution, and $\mathbb{E}X$ and $\sigma_{X}$ are the expectation and standard deviation of $X$.

We derive our result under an assumption of $r$-regularity for the underlying graphs.

We seek to apply Stein's method, and in particular we want to use a result along the lines of Theorem 1.2 in \cite{RR1997}:
\begin{theorem}\label{Theorem1}
Let $(W,W')$ be exchangeable with $\mathbb{E}W = 0$ and $\mathbb{E}W^{2} = 1$. Define the r.v. $R=R(W)$ by
\begin{equation}\label{SteinPairEq}
\mathbb{E}(W'|W) = (1-\lambda)W + R,
\end{equation}
where $0<\lambda<1$. Then, if there is some $A$ for which $|W'-W|\leq A$, we have 
\begin{align*}
\delta := & \sup\{ |\mathbb{E}h(W) - \Phi h|: h\in \mathcal{H}\} \leq \\
& \leq \frac{12}{\lambda} \sqrt{\Var\{ \mathbb{E} \left[ (W'-W)^{2} | W \right] \}} + 37 \frac{\sqrt{\mathbb{E}R^{2}}}{\lambda} + 48\frac{aA^{3}}{\lambda} + 8\frac{aA^{2}}{\sqrt{\lambda}},
\end{align*}
where $\mathcal{H}$ is such that all functions in it are uniformly bounded in absolute value by 1, for any real numbers $c$ and $d$ and any $h\in \mathcal{H}$, the function $h(cx+d)$ is in $\mathcal{H}$, and for any $\epsilon > 0$ and any $h\in\mathcal{H}$, the functions $h_{\epsilon}^{+}, h_{\epsilon}^{-}$ are also in $\mathcal{H}$, and 
\begin{equation*}
\int \tilde{h}(x; \epsilon) \Phi(dx) \leq a\epsilon
\end{equation*}
for some constant $a$ which depends only on the class $\mathcal{H}$.
\end{theorem}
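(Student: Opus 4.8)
The plan is to run the classical exchangeable-pair argument for normal approximation via Stein's method, reserving the real care for the (possibly non-smooth) test functions in $\mathcal{H}$, since that is where the explicit numerical constants get manufactured.

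First I would set up the Stein equation: for $h\in\mathcal{H}$ let $f=f_h$ be the bounded solution of $f'(w)-wf(w)=h(w)-\Phi h$, so that $\mathbb{E}h(W)-\Phi h=\mathbb{E}[f'(W)]-\mathbb{E}[Wf(W)]$. For smooth $h$ one has the usual sup-norm estimates on $f,f',f''$; for $h$ merely bounded (e.g.\ an indicator of a half-line, so that $\delta$ dominates the Kolmogorov distance) one first replaces $h$ by the smoothed functions $h_\epsilon^{+},h_\epsilon^{-}\in\mathcal{H}$, carries the estimate through for those, and then strips off the smoothing at a cost of $\int\tilde h(x;\epsilon)\,\Phi(dx)\le a\epsilon$, optimizing $\epsilon$ at the end.

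The core computation uses exchangeability of $(W,W')$, namely $\mathbb{E}[F(W,W')]=0$ for every antisymmetric $F$. Applied to $F(w,w')=(w'-w)\bigl(f(w')+f(w)\bigr)$, together with a Taylor expansion of $f(W')$ about $W$ and the identity $\mathbb{E}[W'-W\mid W]=-\lambda W+R$ coming from \eqref{SteinPairEq}, this yields
\begin{equation*}
2\lambda\,\mathbb{E}[Wf(W)]=\mathbb{E}\bigl[(W'-W)^2 f'(W)\bigr]-2\,\mathbb{E}[Rf(W)]+(\text{Taylor remainder}),
\end{equation*}
the remainder being $O(A)$ times a modulus-of-continuity term for $f'$, which after smoothing is controlled by $\epsilon$. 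Next I would replace $\mathbb{E}[(W'-W)^2 f'(W)]$ by $\mathbb{E}\bigl[\mathbb{E}[(W'-W)^2\mid W]\bigr]\cdot\mathbb{E}[f'(W)]$ at a Cauchy--Schwarz cost governed by $\sqrt{\Var\{\mathbb{E}[(W'-W)^2\mid W]\}}$, and use $\mathbb{E}[(W'-W)^2]=2\lambda$ up to a term in $R$ (which follows from \eqref{SteinPairEq} and $\mathbb{E}W^2=1$). Substituting back into $\mathbb{E}h(W)-\Phi h=\mathbb{E}[f'(W)]-\mathbb{E}[Wf(W)]$ then exhibits $\delta$ as a sum of exactly three contributions: the conditional-variance term (coefficient $12/\lambda$), the terms produced by $R$ via Cauchy--Schwarz with $\sqrt{\mathbb{E}R^2}$ and $\|f\|_\infty$ (coefficient $37/\lambda$), and the Taylor-remainder terms in $A$ (coefficients $48aA^3/\lambda$ and $8aA^2/\sqrt{\lambda}$, the two powers reflecting the unsmoothed versus smoothed contributions and the balancing of $\epsilon$).

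I expect the main obstacle to be the non-smooth test-function step: bounding the relevant modulus of continuity of $f_h'$ when $h$ is only an indicator, and pushing the smoothing parameter $\epsilon$ through the Taylor estimates carefully enough that optimizing it produces the $A^2/\sqrt{\lambda}$ term rather than something larger. The exchangeability identity and the linear-regression reduction are conceptually routine; the remaining labor is the bookkeeping needed to land on the stated constants.
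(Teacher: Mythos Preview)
The paper does not actually prove this theorem: it is simply quoted verbatim as Theorem~1.2 of Rinott and Rotar (1997), with no argument supplied, and in fact the paper then sidesteps it in favor of R\"ollin's version (Theorem~\ref{Rollin}), which is likewise cited without proof. So there is nothing in the paper to compare your proposal against beyond the bare citation.

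That said, your sketch is the right shape for how Rinott and Rotar establish the result: Stein equation for $h$, the antisymmetric-function identity from exchangeability, Taylor expansion of $f(W')$ about $W$, Cauchy--Schwarz to isolate $\sqrt{\Var\{\mathbb{E}[(W'-W)^2\mid W]\}}$ and $\sqrt{\mathbb{E}R^2}$, and a smoothing-and-optimize step in $\epsilon$ to handle non-smooth $h\in\mathcal{H}$, which is precisely what generates the $aA^3/\lambda$ and $aA^2/\sqrt{\lambda}$ terms. One small correction: the identity $\mathbb{E}[(W'-W)^2]=2\lambda$ does not hold exactly here (that is the $R\equiv 0$ case); in general $\mathbb{E}[(W'-W)^2]=2\lambda-2\,\mathbb{E}[WR]$, and this extra $R$-contribution is absorbed into the $37\sqrt{\mathbb{E}R^2}/\lambda$ term. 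Apart from that, your plan matches the standard argument behind the cited theorem.
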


Our $W$ would be some normalization of a vertex-count on the Voter-type model graphs we deal with.

\section{Brief overview of Stein's method}\label{SectionSM}

Stein's technique goes as follows: for a given probability distribution, one can come up with an appropriate operator which implicitly defines the distribution. For example, the operator $A$ in $Af(x)=f'(x)-xf(x)$ implicitly defines the Gaussian distribution, in the sense that 1) $\mathbb{E} Af(Z)=0$ for all absolutely continuous $f$ with $\mathbb{E}|f'(Z)|<\infty$, where $Z$ is a variable with the standard normal distribution; and 2) if for some random variable $W$ we have $\mathbb{E}Af(W)=0$ for all absolutely continuous functions $f$ with $|f'|<\infty$, then $W$ has the standard normal distribution.

Next, for an appropriately chosen $A$, one can solve the differential equation given by 
\begin{equation}\label{eq:SteinDiffEq}
Af(x) = 1_{w\leq x}-\Phi(x),
\end{equation} 
where $\Phi(x)$ is the c.d.f. of the target distribution.

But now, armed with the solution to equation (\ref{eq:SteinDiffEq}), and within the context of an appropriate metric (above we used the Kolomogorov metric), we can produce a bound on the distance $|P(W\leq x)-\Phi(x)|$ between a given distribution we want to analyze, and the target distribution with c.d.f. $\Phi(x)$.

For example,
\begin{equation*}
f_{x}(w) = e^{w^{2}/2}\int_{w}^{\infty}e^{-t^{2}/2} \left( \Phi(x)-1_{t\leq x} \right) dt
\end{equation*}
is the unique bounded solution to
\begin{equation*}
f'_{x}(w)-wf_{x}(w)=1_{w\leq x}-\Phi(x),
\end{equation*}
where $\Phi(x)$ is the c.d.f. of the standard normal. And next, under the Wasserstein metric given by $\mathcal{H}=\{h: \mathbb{R}\rightarrow \mathbb{R}: |h(x)-h(y)|\leq |x-y|\}$, one can show that (for example, see \cite[3.1]{Ross2011}) 
\begin{equation*}
d_{W}(W,Z)\leq (A+B)n^{-1/2}, \quad A=\mathbb{E}|X_{1}|^{3}, \quad B=\frac{\sqrt{2\mathbb{E}[X_{1}^{4}]}}{\sqrt{\pi}},
\end{equation*}
where $W$ is a normalized sum of $n$ i.i.d. standard normal variables endowed with a fourth moment, and $d_{W}(W,Z)$ stands for the Wasserstein distance between $W$ and the standard normal distribution.

The potential utility of Stein's technique in producing powerful bounds and obtaining convergence results is clear; and, indeed, Stein's method has been instrumental in the proofs of a variety of interesting convergence and bounding results. In general, there are two standard avenues of research focusing on Stein's method -- one can try to obtain formulas for bounds on the distances between various target distributions and various random variables (or rather, their distributions) -- examples of recent results in this direction include \cite{FulmanRoss2012} (Exponential distribution), \cite{PikeRen2012} (Laplace), and \cite{GoldIslak2013} (zero-bias couplings and concentration inequalities); and one can use these formulas and techniques to obtain results pertaining to specific problems, including many classic problems such as the Birthday Problem or the Coupon Collector Problem -- for examples, refer to \cite{ChatDiacMeck2005} (comprehensive survey) and \cite{GoldZhang2011} (Lightbulb process).

\section{Initial setup and main result}\label{SectionResults}

\subsection{Initial setup} 

We first seek to show that (\ref{SteinPairEq}) holds. To that end, let $X$ be the number of 1's at stationarity. Let $$Y = 2X - N = \sum_{i=1}^{N} \xi_{i}.$$
Here $N$ is the total number of nodes and $\xi_{i}$ is the value of node $i$ (under an arbitrary indexing). Examining $Y$ is equivalent to examining $X$. Next, define
\begin{equation*}
W:=\frac{Y-\mathbb{E}Y} {\sigma_{Y}}.
\end{equation*}
Note $\sigma_{Y}$ is a constant dependent on $N$:
\begin{equation*}
\sigma^{2}_{Y}=\Var\sum_{i=1}^{N}\xi_{i}=\sum_{i=1}^{N} \Var\xi_{i}+2\sum_{1\leq i<j\leq N} \Cov(\xi_{i},\xi_{j})
\end{equation*}
Now, $W$ is mean-0 variance-1. To get the condition for Theorem \ref{Theorem1}, we first need to define a $W'$ as the equivalent of $W$ after one further turn of the Neighborhood Attack model. That is to say, if $W$ is the normalized node count of the model at some turn of its evolution in stationarity, then $W'$ is the same normalized node count in the next turn.

Note, once again, that we assume $r$-regularity for the graph (i.e. every node has exactly $r$ neighbours). We want $r$-regularity for the sake of symmetry, because without symmetry, the problem under consideration is far less tractable.


\subsection{Main result}

\begin{theorem}\label{MainTheorem}
Under the assumptions 
\begin{equation} \label{rstarAssumption}
r^{*}_{i} = r^{*} \hspace{1cm} \forall i \in I,
\end{equation} 
where $I$ is the index-set of the nodes, $r^{*}_{i}$ is the number of first or second order neighbors node $i$ has, and $r^{*}$ is some constant dependent on the graph; 
and 
\begin{equation}\label{CovCondition}
\Cov(\eta,\theta) \leq 0,
\end{equation} 
where $\eta$ is the count of pairs of neighbors or near-neighbors with values both equal to 1, and $\theta$ is the count of pairs of $-1$'s,
we derive the bound on the distance between (the distributions of) $W$ and the standard normal,
\begin{equation}\label{BoundFINAL}
\delta \leq 48 \frac{r^{2}}{\sqrt{r+1}\sqrt{N}} + (2^{19/2}+48)\frac{\sqrt{r+1}}{\sqrt{N}}.
\end{equation}
\end{theorem}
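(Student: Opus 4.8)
The plan is to apply Theorem~\ref{Theorem1} with the exchangeable pair $(W,W')$ built from one step of the Neighborhood Attack chain at stationarity, so the work divides into four tasks: (i) verify the linearity condition \eqref{SteinPairEq} and identify $\lambda$ and $R$; (ii) bound the jump size $A$ with $|W'-W|\le A$; (iii) control $\Var\{\mathbb{E}[(W'-W)^2\mid W]\}$; and (iv) assemble the pieces and simplify into \eqref{BoundFINAL}. For (i), I would compute $\mathbb{E}(Y'-Y\mid \text{configuration})$ directly from the dynamics. In one step a uniformly chosen node $v$ and its $r$ neighbors are all reset to $\pm1$ by a fair coin, so the contribution of that neighborhood to $Y$ is replaced by an independent mean-zero quantity; summing over the uniform choice of $v$ and using that each node lies in exactly $r+1$ neighborhoods (its own plus those of its $r$ neighbors, by $r$-regularity), one gets $\mathbb{E}(Y'-Y\mid Y\text{-config}) = -\frac{r+1}{N}Y$ up to a term that, after conditioning further down to $\sigma(W)$, becomes the remainder $R$. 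Thus $\lambda = (r+1)/N$, which drives all the $1/\lambda$ factors in the bound; the expectation being exactly linear in $Y$ (before projecting onto $W$) is what makes $R$ small, and the assumption \eqref{rstarAssumption} that every node has the same number $r^*$ of first- or second-order neighbors is precisely what is needed for the conditional expectation of $(Y'-Y)$-type quadratics to depend only on symmetric counts rather than on $v$.

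For (ii), the jump bound is elementary: a single step only touches $v$ and its $r$ neighbors, so $|Y'-Y|\le 2(r+1)$ and hence $A = 2(r+1)/\sigma_Y$. This requires a lower bound on $\sigma_Y^2$; by symmetry $\Var\xi_i$ and the covariances are the same across $i$, and I would argue $\sigma_Y^2 = \Theta(N/(r+1))$ — essentially $N$ nearly-independent blocks of size $r+1$ — which gives $A = O(\sqrt{r+1}/\sqrt{N})$ and matches the $(r+1)/\sqrt N$ scaling visible in the second term of \eqref{BoundFINAL}. The variance-rate computation is routine once the block structure is made precise via the stationary measure's symmetries.

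Task (iii) is where I expect the real difficulty. We need $\mathbb{E}[(W'-W)^2\mid W]$ to be nearly deterministic, i.e. its variance must be small; this is the term carrying the $\sqrt{\Var\{\cdots\}}$ factor times $12/\lambda$, which on its own would be $O(N)\cdot\sqrt{\Var}$ and so demands $\Var\{\mathbb{E}[(W'-W)^2\mid W]\} = O(1/N^{?})$ of the right order to produce the $r^2/(\sqrt{r+1}\sqrt N)$ first term. Expanding $(Y'-Y)^2$ and taking the conditional expectation produces (after using $r$-regularity and \eqref{rstarAssumption}) an expression that is an affine function of the counts $\eta$ of same-sign $(1,1)$ near-pairs and $\theta$ of $(-1,-1)$ near-pairs; the variance of this affine combination then expands into $\Var\eta$, $\Var\theta$, and $\Cov(\eta,\theta)$, and the negative-correlation hypothesis \eqref{CovCondition} is exactly what lets me drop the cross term and bound everything by $\Var\eta + \Var\theta = O(N r^{*}) = O(Nr^2)$. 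Dividing by $\sigma_Y^4 = \Theta(N^2/(r+1)^2)$ gives $\Var\{\mathbb{E}[(W'-W)^2\mid W]\} = O(r^2(r+1)^2/N)$, and multiplying by $(12/\lambda)^2 = O(N^2/(r+1)^2)$ and taking the square root yields $O(r\sqrt{r+1}\cdot\sqrt{N}/\sqrt{N}\cdot\tfrac1{\sqrt{r+1}}) $... — the bookkeeping here is the part that must be done carefully, and getting the explicit constant $48$ on $r^2/(\sqrt{r+1}\sqrt N)$ requires tracking each factor rather than just the order.

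Finally, for (iv) I would check that $\mathbb{E}R^2$ is negligible compared to the two displayed terms (the remainder should be $O(1/N)$ or smaller and get absorbed), substitute $\lambda=(r+1)/N$, $A=2\sqrt{r+1}/\sqrt N$ (with the precise $\sigma_Y$ constant), and the variance estimate from (iii) into the inequality of Theorem~\ref{Theorem1}, then collect: the $12/\lambda$ term gives the $48\,r^2/(\sqrt{r+1}\sqrt N)$ piece, the $aA^3/\lambda$ term contributes at order $\sqrt{r+1}/(N^{1/2})\cdot(\text{smaller})$ and is dominated, the $aA^2/\sqrt\lambda$ term gives $O(\sqrt{r+1}/\sqrt N)$, and the $48aA^3/\lambda$ and remainder terms fold into the constant $2^{19/2}+48$ multiplying $\sqrt{r+1}/\sqrt N$ — the $2^{19/2}$ presumably coming from $8\cdot 2^{?}$ after substituting $A=2\sqrt{r+1}/\sqrt N$ into the $\sqrt\lambda$ term and using the class constant $a$. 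The main obstacle throughout is (iii): expressing $\mathbb{E}[(W'-W)^2\mid W]$ exactly as an affine function of $\eta,\theta$ relies essentially on \eqref{rstarAssumption}, and then bounding its variance cleanly relies essentially on \eqref{CovCondition}; without both, the cross-terms and the $v$-dependence would survive and the bound would not close.
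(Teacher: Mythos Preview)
Your plan has the right skeleton, but there are two concrete gaps that would prevent it from closing. First, you invoke Theorem~\ref{Theorem1}, which requires $(W,W')$ to be an exchangeable pair, and you take this for granted. The chain here is \emph{not} reversible (on the circle graph one can transition from a state with $Y=N-2$ to the all-ones state, but not back), so exchangeability of $(W,W')$ is not automatic and you give no argument for it. The paper sidesteps this by appealing to R\"ollin's variant (Theorem~\ref{Rollin}), which only requires $\mathcal L(W')=\mathcal L(W)$; that is exactly stationarity and needs no further work. Without either proving exchangeability or switching to R\"ollin, step~(iv) does not go through.

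Second, your variance heuristic is off by a factor of $(r+1)^2$: you assert $\sigma_Y^2=\Theta(N/(r+1))$, but the correct order is $\sigma_Y^2=\Theta((r+1)N)$. (Your own block heuristic actually gives this: $N/(r+1)$ roughly independent blocks, each with variance $\Theta((r+1)^2)$, yields $(r+1)N$.) The paper proves $\tfrac{(r+1)N}{2}\le\sigma_Y^2\le(r+1)N$ by an exact computation using stationarity, $0=\mathbb{E}(Y')^2-\mathbb{E} Y^2=2\mathbb{E}(Y\Delta Y)+\mathbb{E}(\Delta Y)^2$, together with the identity $\mathbb{E}[(\Delta Y)^2\mid\{q_i\}]=(r+1)^2+\tfrac1N\sum_i i^2 q_i$. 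This same identity is the key to task~(iii): one gets $\Var\mathbb{E}[(\Delta Y)^2\mid Y]\le N^{-2}\Var(\sum i^2 q_i)$, and under assumption~\eqref{rstarAssumption} the quantity $\sum i^2 q_i$ equals $N(r+1)+2(\alpha-\beta)$, whose variance is $16\Var(\eta+\theta)$. The covariance hypothesis~\eqref{CovCondition} is then used not to ``drop the cross term'' and bound by $\Var\eta+\Var\theta$ as you suggest, but to compare $\Var(\eta+\theta)\le\Var(\eta-\theta)=\tfrac{(r^*)^2}{4}\Var(Y)$, where the last equality comes from the deterministic relation $2(\eta-\theta)=r^*Y$. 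This recycling of the already-established bound on $\Var(Y)$ is the step that makes the constants close; a direct attack on $\Var\eta$ alone would require a separate (and harder) argument. Finally, $R=0$ exactly, not merely $O(1/N)$: the conditional expectation $\mathbb{E}(\Delta Y\mid\{q_i\})=-\tfrac{r+1}{N}Y$ is already a function of $Y$ alone.
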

We first establish bounds on $\sigma_{Y}$ in Section \ref{SectionVarianceY}, and then complete the proof in Section \ref{SectionBigVar1}.


\section{Details and proof}\label{SectionProof}

\subsection{Proving $\mathbb{E}(W'|W)=(1-\lambda)W$}\label{SectionLambda}

Given the $r$-regularity assumption, the sum $\left( \sum_{i=1}^{N} \xi_{i} \right)$ changes each turn by between $-2(r+1)$ and $2(r+1)$. A basic example of a graph of this type is the circle (2-regular) graph, in which we have a set of nodes arranged in a circle, each node with two neighbours.

We also assumed uniformity in choosing nodes and in flipping $1$'s or $-1$'s. Under such conditions $\mathbb{E} \xi_{i}=0$, i.e. at stationarity each node is $1$ or $-1$ with equal probability. The sum of the node values will tend toward 0 (under certain conditions; one of which, clearly, has to do with the number of neighbors each node has, since our model takes only the extreme values over the complete graph), since if nodes of a certain value ($+1$ or $-1$) dominate the graph, we are less likely to see an increase in the number of the nodes of that value.

We show in Section \ref{SectionVarianceY} that, as desired,
\begin{equation*}
\mathbb{E}(W'|W) = \left( 1-\frac{r+1}{N} \right) W,
\end{equation*}
which complies with the Stein linearity condition (\ref{SteinPairEq})
\begin{equation*}
\mathbb{E}(W'|W) = \left( 1-\lambda \right) W + R,
\end{equation*}
where $0\leq\lambda\leq 1$ and $R$ is a random variable. In our case conveniently $R=0$. As for lambda,
\begin{equation*}
\lambda=\frac{r+1}{N}.
\end{equation*}

\subsection{R\"{o}llin's result} 
In general, the next step is to show that $W$ and $W'$ are exchangeable, i.e. $(W,W')=^{d}(W',W)$, as was done in \cite{Fulman2004}. Exchangeability clearly holds when the Markov chain underlying $W$ and $W'$ is stationary and reversible. Reversibility is not always available or easily proved. 

For example, our chain is clearly not necessarily reversible. Consider the circle graph. It is easy to see that for $N$ large, $Y$ can take the value $N-2$ -- i.e. there is an attainable at stationarity arrangement of values for the nodes in which all nodes but one have the value of 1. Now, the probability of going from that arrangement to the all 1's arrangement for which $Y=N$ is positive; but the probability of going from $Y=N$ to $Y=N-2$ is zero, and hence our chain fails to satisfy the detailed balance equations $\pi(x)P(x,y) = \pi(y)P(y,x)$.


However, a recent result by Adrian R\"{o}llin removes the necessity for exchangeability. R\"{o}llin's theorem  (see \cite[Theorem 2.1]{Rollin2008}) states:
\begin{theorem}\label{Rollin}  Assume $W,W'$ are r.v.s on the same probability space, s.t. $\mathcal{L}(W')=\mathcal{L}(W)$ ($\mathcal{L}$ for 'law'), $\mathbb{E}W=0$, $\Var(W)=1$. Given $\mathbb{E}(W'|W)=(1-\lambda)W+R$, for
\begin{equation*} 
\delta:=\sup_{h\in \mathcal{H}}|\mathbb{E}h(W) - \mathbb{E}h( \mathcal{Z} )|
\end{equation*}
(here $\mathcal{Z}$ is the standard normal distribution, and $\mathcal{H}$ is the family of functions associated with the Wasserstein distance), we have
\begin{equation*}
\delta\leq \frac{6}{\lambda}\sqrt{\Var \mathbb{E}^{W}(W'-W)^{2}}+\frac{19\sqrt{\mathbb{E}R^{2}}}{\lambda} + 4\sqrt{\frac{a \mathbb{E}|W'-W|^{3}}{\lambda}}.
\end{equation*}
If also there exists a constant $A$ s.t. $|W'-W|\leq A$ a.s., we have
\begin{equation*}
\delta\leq \frac{12}{\lambda}\sqrt{\Var \mathbb{E}^{W}(W'-W)^{2}}+\frac{37\sqrt{\mathbb{E} R^{2}}}{\lambda} + 32\frac{A^{3}}{\lambda}+6\frac{A^{2}}{\sqrt{\lambda}}.
\end{equation*}
\end{theorem}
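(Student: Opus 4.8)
The plan is to run the exchangeable-pair version of Stein's method, arranging the bookkeeping so that the only place where exchangeability of $(W,W')$ would have been used is instead handled by the weaker hypothesis $\mathcal{L}(W')=\mathcal{L}(W)$; this is precisely the point on which the argument departs from the proof of Theorem~\ref{Theorem1}. Fix $h\in\mathcal{H}$ (the $1$-Lipschitz Wasserstein class) and pass to the Stein equation $f'(w)-wf(w)=h(w)-\mathbb{E}h(\mathcal{Z})$, whose bounded solution $f=f_{h}$ obeys the standard estimates $\|f\|_{\infty}\le 1$, $\|f'\|_{\infty}\le\sqrt{2/\pi}$, $\|f''\|_{\infty}\le 2$. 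Then $\mathbb{E}h(W)-\mathbb{E}h(\mathcal{Z})=\mathbb{E}\bigl[f'(W)-Wf(W)\bigr]$, so everything reduces to estimating $\mathbb{E}[Wf(W)]$ through the pair $(W,W')$.

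The engine is the quantity $\mathbb{E}\bigl[(W'-W)(f(W)+f(W'))\bigr]$, which I would expand in two ways. Setting $F(w)=\int_{0}^{w}f(t)\,dt$ (which grows at most linearly, so $\mathbb{E}|F(W)|<\infty$ because $\mathbb{E}|W|<\infty$), equality in law gives $\mathbb{E}[F(W')-F(W)]=0$; a two-term Taylor expansion shows $(W'-W)(f(W)+f(W'))-2\bigl(F(W')-F(W)\bigr)$ is a sum of integral remainders bounded pointwise by $C\|f''\|_{\infty}|W'-W|^{3}$, so the expectation of the left side is at most $C\|f''\|_{\infty}\,\mathbb{E}|W'-W|^{3}$. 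Expanding the other way, $\mathbb{E}[(W'-W)(f(W)+f(W'))]=2\mathbb{E}[(W'-W)f(W)]+\mathbb{E}[(W'-W)(f(W')-f(W))]$; by the defining relation $\mathbb{E}(W'-W\mid W)=-\lambda W+R$ the first piece is $-2\lambda\mathbb{E}[Wf(W)]+2\mathbb{E}[Rf(W)]$, while one Taylor step on the second piece gives $\mathbb{E}[(W'-W)^{2}f'(W)]$ plus another cubic remainder. Equating the two expansions yields the master identity
\[
2\lambda\,\mathbb{E}[Wf(W)]=\mathbb{E}\bigl[(W'-W)^{2}f'(W)\bigr]+2\,\mathbb{E}[Rf(W)]+\mathcal{R},\qquad |\mathcal{R}|\le C\,\|f''\|_{\infty}\,\mathbb{E}|W'-W|^{3}.
\]

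Substituting this into $\mathbb{E}[f'(W)-Wf(W)]$ and using $\mathbb{E}[(W'-W)^{2}f'(W)]=\mathbb{E}\bigl[f'(W)\,\mathbb{E}((W'-W)^{2}\mid W)\bigr]$ gives
\[
\mathbb{E}h(W)-\mathbb{E}h(\mathcal{Z})=\mathbb{E}\!\left[f'(W)\Bigl(1-\tfrac{1}{2\lambda}\,\mathbb{E}\bigl((W'-W)^{2}\mid W\bigr)\Bigr)\right]-\tfrac{1}{\lambda}\mathbb{E}[Rf(W)]-\tfrac{1}{2\lambda}\mathcal{R}.
\]
From $\mathbb{E}W=0$, $\Var W=1$, $\mathcal{L}(W')=\mathcal{L}(W)$ and the defining relation one computes $\mathbb{E}(W'-W)^{2}=2\lambda-2\mathbb{E}[WR]$, so the bracket has mean $\mathbb{E}[WR]/\lambda$; Cauchy--Schwarz then bounds the first term by $\|f'\|_{\infty}\bigl(\tfrac{1}{2\lambda}\sqrt{\Var\mathbb{E}^{W}(W'-W)^{2}}+\tfrac{1}{\lambda}\sqrt{\mathbb{E}R^{2}}\bigr)$ (using $|\mathbb{E}WR|\le\sqrt{\mathbb{E}R^{2}}$), and the $R$-term by $\|f\|_{\infty}\lambda^{-1}\sqrt{\mathbb{E}R^{2}}$. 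Inserting the numerical values of $\|f\|_{\infty},\|f'\|_{\infty},\|f''\|_{\infty}$ produces bounds of the stated shape with the $\tfrac{6}{\lambda}\sqrt{\Var\mathbb{E}^{W}(W'-W)^{2}}$ and $\tfrac{19\sqrt{\mathbb{E}R^{2}}}{\lambda}$ contributions.

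The remaining, and I expect most delicate, piece is $\lambda^{-1}\mathcal{R}$. With no a.s.\ bound on $|W'-W|$ the crude estimate $\|f''\|_{\infty}\le 2$ is not enough, so I would smooth $h$ to $h_{\epsilon}$ with $\|h_{\epsilon}''\|_{\infty}\le C/\epsilon$, pay a smoothing error of order $a\epsilon$ (this is where the class constant $a$ enters), balance it against $\epsilon^{-1}\lambda^{-1}\mathbb{E}|W'-W|^{3}$, and optimize over $\epsilon$ to obtain the $4\sqrt{a\,\mathbb{E}|W'-W|^{3}/\lambda}$ term. When $|W'-W|\le A$ a.s.\ one instead uses $\|f''\|_{\infty}\le 2$ together with the two inequalities $\mathbb{E}|W'-W|^{3}\le A\,\mathbb{E}(W'-W)^{2}$ and $\mathbb{E}|W'-W|^{3}\le A^{2}\sqrt{\mathbb{E}(W'-W)^{2}}$, and, since $\mathbb{E}(W'-W)^{2}=2\lambda-2\mathbb{E}[WR]$, this yields the $32A^{3}/\lambda$ and $6A^{2}/\sqrt{\lambda}$ contributions. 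The genuine points of care are: checking the integrability that legitimizes $\mathbb{E}[F(W')-F(W)]=0$; tracking every constant through the Taylor and Cauchy--Schwarz steps; and carrying out the smoothing so that the pair $(W,W')$ enters the final bound only through $\Var\mathbb{E}^{W}(W'-W)^{2}$ and $\mathbb{E}|W'-W|^{3}$ (equivalently, through $A$).
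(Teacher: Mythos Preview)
The paper does not prove this theorem; its entire proof is the single line ``See \cite[Theorem 2.1]{Rollin2008}.'' Your sketch is in fact a faithful outline of R\"ollin's own argument: the decisive idea---replacing the exchangeability-based identity $\mathbb{E}[(W'-W)(f(W')+f(W))]=0$ by the observation that $\mathbb{E}[F(W')-F(W)]=0$ for the antiderivative $F$ whenever $\mathcal{L}(W')=\mathcal{L}(W)$, and then controlling the trapezoidal error by a cubic remainder---is exactly his, and the rest of your bookkeeping (the computation $\mathbb{E}(W'-W)^{2}=2\lambda-2\mathbb{E}[WR]$, the Cauchy--Schwarz splitting into variance and $R$-contributions, the smoothing to obtain the $\sqrt{a\,\mathbb{E}|W'-W|^{3}/\lambda}$ term, and the separate handling of the bounded case) follows the structure of \cite{Rollin2008} closely. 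So there is nothing to compare against in the present paper; you have supplied what the paper delegates to the reference.
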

\begin{proof}
See \cite[Theorem 2.1]{Rollin2008}.
\end{proof}
In our case $R=0$; so the bound is
\begin{equation}\label{RollinBoundA}
\delta\leq \frac{12}{\lambda}\sqrt{\Var \mathbb{E}[(W'-W)^{2}|W]} + 32\frac{A^{3}}{\lambda} + 6\frac{A^{2}}{\sqrt{\lambda}}.
\end{equation}
The next step is to bound $|W'-W|$. Note $|Y'-Y|\leq 2(r+1)$. So $$|W'-W|\leq\frac{2(r+1)}{\sigma_{Y}}=A.$$
Thus, the bound becomes
\begin{equation}\label{BigBound}
\delta\leq \frac{12N}{(r+1)\sigma_{Y}^{2}}\sqrt{\Var \mathbb{E}[(Y'-Y)^{2}|Y]} + 32\frac{8(r+1)^{2}N}{\sigma_{Y}^{3}}  + 6\frac{4(r+1)^{3/2}\sqrt{N}}{\sigma_{Y}^{2}}.
\end{equation}

\subsection{Bounding the variance of $Y$}\label{SectionVarianceY}
In effect, the next goal is to bound the two terms $\Var \left[ \mathbb{E}(Y'-Y)^{2} | Y \right]$ and $\Var(Y') = \Var(Y) =  \sigma_{Y}^{2}$. For an explicit formula for the value of the corresponding $\sigma_{Y}$ in the Anti-voter model, see \cite[Chapter 14]{AldousFillDraftbook}. Another relevant paper dealing with the Anti-voter case can be found in \cite{DonnellyWelsh1984}.

Let us first try to find $\Var{Y}$. Define $\Delta Y = Y' - Y$ to obtain $Y' = Y + \Delta Y$.

By stationarity, it follows that $0 = \Var(Y')-\Var(Y).$ Hence:
\begin{align}
\nonumber 0 &= \mathbb{E} (Y')^{2} - \mathbb{E} Y^{2} = \\
\nonumber &= \mathbb{E} \left[ Y^{2} + 2Y\Delta Y + (\Delta Y)^{2} - Y^{2} \right] = \\
&= 2\mathbb{E} (Y\Delta Y) + \mathbb{E} (\Delta Y)^{2}. \label{VarY}
\end{align}

To continue, we need to obtain a firmer grip on the r.v. $\Delta Y$. It is easy to see that $\Delta Y$ takes values between $-2(r+1)$ and $2(r+1)$, and that the probability distribution of $\Delta Y$ is a function of certain edge and vertex counts on the coloured graph, themselves random variables. Specifically:

Define $q_{i}$ as the number of nodes s.t. the sum of the values at the node and all its neighbors equals $i$. Clearly, $i$ takes integer values (all odd or all even depending on the parity of $r$) between $-(r+1)$ and $(r+1)$. Specifically, if $r$ is odd, $i$ takes the values $-(r+1), -(r-1), ..., -2, 0, 2, ..., (r-1), (r+1)$; and if $r$ is even, $i$ takes the values $-(r+1), -(r-1),..., -1, 1, ..., (r-1), (r+1)$. In each case $i$ takes $(r+2)$ distinct values. Call the set of those values (the possible values $i$ can take) $I$.

Simple counting produces two useful identities involving the $q_{i}$'s: 
\begin{equation*}
\sum_{i\in I} q_{i} = N  \text{  and  } \sum_{i\in I} iq_{i}= (r+1)Y.
\end{equation*}

Now, at each turn of the Neighborhood Attack process we pick a node uniformly at random (i.e. with probability $1/N$), and turn its value and the value of all its neighbors to either $1$ or $-1$ uniformly at random (i.e. with probability $1/2$).

It thus follows that $\Delta Y$ has the (conditional on $\{q_{i}\}$) p.d.f.:
\begin{equation*}
\Delta Y = \begin{cases}
  (r+1)-i & \text{ with probability } q_{i}\frac{1}{2N} \\
  -(r+1)-i & \text{ with probability } q_{i}\frac{1}{2N} \\
\end{cases}
\end{equation*}
So for example, $\Delta Y$ takes the value of $2(r+1) = (r+1) - (-(r+1))$ with probability $q_{r+1}/2N$; and the value $-2(r+1) = -(r+1) - (r+1)$ with probability $q_{-(r+1)}/2N$.

Thus, we have 
\begin{align*}
\mathbb{E} \left[ \Delta Y | Y \right] &= \mathbb{E}\mathbb{E} \left[ \left( \Delta Y | \{ q_{i} \} \right) | Y \right] =\\
&= \mathbb{E} \left( \sum_{i\in I} \left[ (r+1)-i \right] \frac{q_{i}}{2N} + \sum_{i\in I} \left[ -(r+1)-i \right] \frac{q_{i}}{2N} | Y \right) = \\
&= -2\mathbb{E} \left( \sum_{i\in I}i\frac{q_{i}}{2N} |Y \right) = -\frac{(r+1)Y}{N},
\end{align*}
which is what we stated in Section \ref{SectionLambda}. So:
\begin{lemma}
In the Neighborhood Attack model on an $r$-regular graph,
\begin{equation*}
\mathbb{E} \left[ \Delta Y | Y \right] = -\frac{(r+1)Y}{N}.
\end{equation*}
That is, the Stein linearity condition is satisfied with $\lambda = \frac{(r+1)}{N}$ and $R=0$.
\end{lemma}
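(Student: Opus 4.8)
The plan is to verify the Stein linearity condition directly by computing $\mathbb{E}[\Delta Y \mid Y]$ through a two-stage conditioning: first condition on the full ``colour configuration'' (equivalently, on the vector $\{q_i\}_{i \in I}$ of neighbourhood-sum counts), where the one-step dynamics are explicit, and then average over configurations consistent with a given value of $Y$. The key structural inputs are the two counting identities $\sum_{i \in I} q_i = N$ and $\sum_{i \in I} i q_i = (r+1)Y$; the first just says every node is counted once, and the second follows from summing the node-plus-neighbourhood value over all nodes and observing that each node's value $\xi_j$ is counted once for node $j$ itself and once for each of its $r$ neighbours, giving $(r+1)\sum_j \xi_j = (r+1)Y$.

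First I would record the conditional law of $\Delta Y$ given $\{q_i\}$: picking node $j$ (probability $1/N$) whose neighbourhood-sum is $i$ and recolouring its closed neighbourhood to $+1$ changes $Y$ by $(r+1) - i$, while recolouring to $-1$ changes it by $-(r+1) - i$, each branch carrying the extra factor $1/2$ from the fair coin. Summing over the $q_i$ nodes in each class gives the displayed conditional p.d.f. Second I would take the conditional expectation: the $\pm(r+1)$ contributions cancel in pairs across the two branches, leaving $\mathbb{E}[\Delta Y \mid \{q_i\}] = -\sum_{i \in I} i\, q_i / N = -(r+1)Y/N$ by the second identity. Third, since this conditional expectation is already a deterministic function of $Y$ alone, the tower property gives $\mathbb{E}[\Delta Y \mid Y] = -(r+1)Y/N$ with no further work. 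Translating back to $W = (Y - \mathbb{E}Y)/\sigma_Y$ and using $\mathbb{E}Y = 0$ (which follows from $\mathbb{E}\xi_i = 0$ at stationarity, itself a consequence of the symmetry $p = 1/2$), one gets $\mathbb{E}(W' \mid W) = (1 - \tfrac{r+1}{N})W$, i.e. the Stein pair relation $\mathbb{E}(W' \mid W) = (1-\lambda)W + R$ holds exactly with $\lambda = (r+1)/N$ and $R = 0$.

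Most of this is bookkeeping; the one genuinely substantive point — and hence the step I would be most careful about — is the clean cancellation that forces $R=0$ exactly rather than merely approximately. This hinges on two symmetry features working together: the coin being fair (so the $+(r+1)$ and $-(r+1)$ shifts enter with equal weight and annihilate), and $r$-regularity (so that the ``$(r+1)$'' multiplying $Y$ in the second counting identity is the same constant across all nodes, matching the shift magnitude). If either failed, one would pick up a nonzero remainder term and the downstream bound in Theorem~\ref{Rollin} would have to carry the $\mathbb{E}R^2$ contribution. It is also worth noting explicitly that the identity $\sum_{i\in I} i q_i = (r+1)Y$ is what converts a sum over \emph{nodes} (weighted by local neighbourhood sums) into a global statistic of $Y$, which is precisely why $Y$ is the ``right'' linear functional to track — any other linear combination of the $\xi_i$ would not close up under the dynamics in this way on a general $r$-regular graph.

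Thus the lemma is immediate once the conditional law of $\Delta Y$ is written down; no estimates are needed and the constant $\lambda = (r+1)/N$ will feed directly into the $1/\lambda$ factors appearing in the R\"ollin bound (\ref{RollinBoundA}), making the eventual rate in (\ref{BoundFINAL}) scale like $N^{-1/2}$ once $\sigma_Y^2$ is shown to be of order $N$ in Section~\ref{SectionVarianceY}.
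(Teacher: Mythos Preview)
Your proposal is correct and follows essentially the same route as the paper: condition on the neighbourhood-sum counts $\{q_i\}$, invoke the identities $\sum_i q_i = N$ and $\sum_i i q_i = (r+1)Y$, and observe that the $\pm(r+1)$ contributions from the fair coin cancel so that $\mathbb{E}[\Delta Y \mid \{q_i\}] = -(r+1)Y/N$ is already $\sigma(Y)$-measurable. Your added commentary on why the remainder $R$ vanishes exactly (fair coin plus $r$-regularity) is a helpful gloss the paper leaves implicit, but the argument itself is identical.
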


We continue from (\ref{VarY}):
\begin{align}
\nonumber 0 &= \mathbb{E} (Y')^{2} - \mathbb{E} Y^{2} = \\
\nonumber &= 2\mathbb{E} (Y \Delta Y) + \mathbb{E} (\Delta Y)^{2} = \\
&= -\frac{2(r+1)}{N}\mathbb{E} (Y^{2}) + \mathbb{E}\mathbb{E} \left( \sum_{i\in I} \left[ (r+1-i)^{2}+(-(r+1)-i)^{2} \right] \frac{q_{i}}{2N} | \{q_{i}\} \right) \label{VarY2}
\end{align}
Let us focus on the $\mathbb{E}\left( \sum_{i\in I} \left[ (r+1-i)^{2}+(-(r+1)-i)^{2} \right] \frac{q_{i}}{2N} | \{q_{i}\} \right)$ term:
\begin{align}
\nonumber & \mathbb{E} \left( \sum_{i\in I} \left[ (r+1-i)^{2}+(-(r+1)-i)^{2} \right] \frac{q_{i}}{2N} | \{q_{i}\} \right) = \\
&= (r+1)^{2} + \frac{1}{N} \left[ \sum_{i\in I}i^{2}q_{i} \right] \leq \label{DeltaSquare} \\
\nonumber &\leq (r+1)^{2} + \frac{1}{N} \left[ \sum_{i\in I}(r+1)^{2}q_{i} \right] = (r+1)^{2} + (r+1)^{2} = 2(r+1)^{2}
\end{align}
And therefore, continuing from (\ref{VarY2}),
\begin{align*}
0 &= \mathbb{E} (Y')^{2} - \mathbb{E} Y^{2} = \\
&= -\frac{2(r+1)}{N}\mathbb{E} (Y^{2}) + \mathbb{E}\mathbb{E} \left( \sum_{i\in I} \left[ (r+1-i)^{2}+(-(r+1)-i)^{2} \right] \frac{q_{i}}{2N} | \{q_{i}\} \right) \leq \\
&\leq -\frac{2(r+1)}{N}\mathbb{E} (Y^{2}) + 2(r+1)^{2},
\end{align*}
meaning
\begin{align*}
\sigma_{Y}^{2} = \Var(Y) = \mathbb{E} (Y^{2}) \leq (r+1)N.
\end{align*}


However, since the $\sigma_{Y}$ terms appear in the denominators of the terms in (\ref{BigBound}), we need either a lower bound of $\sigma_{Y}$ or the exact variance of $Y$.

Observe that we have:
\begin{align*}
\Var(Y) &= \mathbb{E}Y^{2} = \frac{N}{2(r+1)} \mathbb{E}\mathbb{E} \left( \sum_{i\in I} \left[ (r+1-i)^{2}+(-(r+1)-i)^{2} \right] \frac{q_{i}}{2N} | \{q_{i}\} \right) =\\
&= \frac{N}{2(r+1)} \left[ (r+1)^{2} + \frac{1}{N}\mathbb{E} \left( \sum_{i\in I} i^{2}q_{i} \right) \right] \geq \frac{(r+1)N}{2}.
\end{align*}
Thus
\begin{lemma}
For the Neighborhood Attack model on an $r$-regular graph, for $Y=\sum_{i=1}^{N} \xi_{i}$ the sum of the values of the nodes of the graph,
\begin{equation*}
\frac{(r+1)N}{2} \leq \sigma_{Y}^{2} \leq (r+1)N.
\end{equation*}
\end{lemma}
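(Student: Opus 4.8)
The plan is to read off both inequalities from a single exact identity tying $\sigma_Y^2$ to the second moment of the one-step increment $\Delta Y = Y'-Y$, and then to control that second moment through the counting identities for the $q_i$ already recorded above.

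First I would note that at stationarity the chain commutes with the global sign-flip of all node values (because $p=1/2$), so $\mathbb{E}\xi_i = 0$ for every $i$, hence $\mathbb{E}Y = 0$ and $\sigma_Y^2 = \mathbb{E}Y^2$. Starting from $\Var(Y') = \Var(Y)$ and writing $Y' = Y + \Delta Y$ gives, exactly as in (\ref{VarY}), the relation $0 = 2\mathbb{E}(Y\Delta Y) + \mathbb{E}(\Delta Y)^2$. Conditioning on $Y$ and using the linearity relation $\mathbb{E}[\Delta Y\mid Y] = -\frac{r+1}{N}Y$ established above, the tower property gives $\mathbb{E}(Y\Delta Y) = \mathbb{E}\bigl[Y\,\mathbb{E}(\Delta Y\mid Y)\bigr] = -\frac{r+1}{N}\mathbb{E}Y^2$, and therefore
\[
\frac{2(r+1)}{N}\,\sigma_Y^2 \;=\; \mathbb{E}(\Delta Y)^2 .
\]

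Next I would compute $\mathbb{E}(\Delta Y)^2$ from the conditional law of $\Delta Y$ given $\{q_i\}$ displayed above. Using $(r+1-i)^2 + ((r+1)+i)^2 = 2(r+1)^2 + 2i^2$ together with $\sum_{i\in I} q_i = N$, one gets $\mathbb{E}\bigl[(\Delta Y)^2 \mid \{q_i\}\bigr] = (r+1)^2 + \frac{1}{N}\sum_{i\in I} i^2 q_i$, so that, after taking expectations and combining with the identity above,
\[
\sigma_Y^2 \;=\; \frac{(r+1)N}{2} + \frac{1}{2(r+1)}\,\mathbb{E}\!\Bigl(\sum_{i\in I} i^2 q_i\Bigr).
\]
The random quantity $\sum_{i\in I} i^2 q_i$ is a sum of nonnegative terms, hence $\ge 0$, which already yields the lower bound $\sigma_Y^2 \ge \frac{(r+1)N}{2}$; and since $|i|\le r+1$ for every $i\in I$ while $\sum_{i\in I} q_i = N$, it is $\le (r+1)^2 N$, which yields $\sigma_Y^2 \le \frac{(r+1)N}{2} + \frac{(r+1)N}{2} = (r+1)N$. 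This gives the claim.

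I do not expect a genuine obstacle. The only points needing a line of care are the tower-property step $\mathbb{E}(Y\Delta Y) = \mathbb{E}[Y\,\mathbb{E}(\Delta Y\mid Y)]$ (legitimate since $\Delta Y$ is bounded) and the fact that stationarity really forces $\Var(Y') = \Var(Y)$ and $\mathbb{E}Y = 0$ --- this is exactly where the symmetry of the coin and the $r$-regularity enter. The two-sided control of $\sum_{i\in I} i^2 q_i$ is elementary; note that the lower bound uses only nonnegativity of the summands, so a sharper lower bound would require quantitative information on the typical sizes of the $q_i$ with $i$ near $\pm(r+1)$, which the stated estimate does not need.
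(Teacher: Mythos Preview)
Your proposal is correct and follows essentially the same route as the paper: the stationarity identity $0 = 2\mathbb{E}(Y\Delta Y) + \mathbb{E}(\Delta Y)^2$, the linearity relation $\mathbb{E}[\Delta Y\mid Y] = -\tfrac{r+1}{N}Y$, and the conditional computation $\mathbb{E}[(\Delta Y)^2\mid\{q_i\}] = (r+1)^2 + \tfrac{1}{N}\sum_i i^2 q_i$ together with $0\le \sum_i i^2 q_i \le (r+1)^2 N$. The only cosmetic difference is that you package both bounds into the single exact identity $\sigma_Y^2 = \tfrac{(r+1)N}{2} + \tfrac{1}{2(r+1)}\mathbb{E}\bigl(\sum_i i^2 q_i\bigr)$, whereas the paper derives the upper and lower bounds in two separate passes.
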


\subsection{Reducing and bounding $\Var \mathbb{E}[(Y'-Y)^{2}|Y]$}\label{SectionBigVar1}
Now we have to evaluate or bound $\Var \mathbb{E}[(Y'-Y)^{2}|Y] = \Var \mathbb{E}^{Y}[(Y'-Y)^{2}]$, as in \cite{RR1997} and \cite{Fulman2004}.

Let us consider the following:
\begin{align*}
\Var &\left[ \mathbb{E}(Y'-Y)^{2} | Y \right] = \Var \left[ \mathbb{E}(\Delta Y)^{2}| Y \right] \leq \Var \left[ \mathbb{E}(\Delta Y)^{2}| \{q_{i}\} \right] =\\
& = \Var \left( (r+1)^{2} + \frac{1}{N} \left[ \sum_{i\in I}i^{2}q_{i}  \right] \right) = \frac{1}{N^{2}}\Var \left( \sum_{i\in I}i^{2}q_{i} \right)
\end{align*}
The transition between the lines follows from (\ref{DeltaSquare}) and (\ref{VarY2}). Also,
\begin{equation*}
\Var \left[ \mathbb{E}(Y'-Y)^{2} | Y \right] = \frac{1}{N^{2}}\Var \left( \mathbb{E} \left( \sum_{i\in I}i^{2}q_{i} \right) | \{ \xi_{i} \} \right) = \frac{1}{N^{2}} \Var\left( \sum_{k=1}^{N} \left( \sum_{j\in \mathcal{N}_{k}}\xi_{j} \right)^{2} \right),
\end{equation*}
where $\mathcal{N}_{k}$ is the set of node $k$ and all its neighbors.

Next,
\begin{align*}
& \Var \left( \sum_{k=1}^{N} \left( \sum_{j\in N_{k}}\xi_{j} \right)^{2} \right) = \\
&= \Var \left( N(r+1) + \sum_{0\leq i,j \leq N; i,j: d(i,j)=1,2} \xi_{i}\xi_{j} \right) = \\
&= \Var \left( 2(\alpha - \beta) \right) = \Var \left( 4(\eta + \theta) \right).
\end{align*}
Here $d(i,j)$ is the distance between nodes $i$ and $j$. For the last line, observe that $N(r+1)$ is invariant, and that the sum $\sum_{0\leq i,j \leq N; i,j: d(i,j)=1,2} \xi_{i}\xi_{j}$ can be interpreted as a sort of an edge count over our graph, with each pair of neighbors or near neighbors of the same sign participating as a $+1$, and each pair of opposite values participating as a $-1$. Each such pair gets counted twice.

Thus let $r^{*}$ be the number of neighbors or near-neighbors (meaning nodes at distances one or two) each node has; $\alpha$ be the number of pairs of neighbors or near-neighbors with equal node-values; and $\beta$ be the number of pairs with opposite node-values. Moreover, $\eta$ corresponds to the count of pairs of neighbors or near-neighbors with values both equal to 1, and $\theta$ is the count of pairs of $-1$'s.

Now, suppose, with $I$ the indexed set of nodes on our graph, that
\begin{equation*}
r^{*}_{i} = r^{*} \hspace{1cm} \forall i \in I.
\end{equation*}
That is, we assume $r^{*}$ is some fixed quantity: i.e. the underlying graph possesses sufficient symmetry so that each node has the same number of neighbors or near-neighbors. For example each node in the circle-graph has 4 neighbors/ near-neighbors.

The assumption that $r_{i}^{*}$ is fixed for all $i$ is not particularly gratuitous, since, either way, $r_{i}^{*}\leq r_{i}^{2}$, and under the present assumptions, $r_{i}=r$ is fixed.

Next, since $2(\alpha+\beta)=r^{*}N$, we have $2(\alpha-\beta)=2\alpha-(r^{*}N-2\alpha)=4\alpha-r^{*}N$. Therefore, $\Var(2(\alpha-\beta)) = \Var(4\alpha) = \Var(4(\eta+\theta))$, where $\eta$ is the number of pairs of neighbors and near neighbors with node-values 1, and $\theta$ is the count of pairs with values $-1$.

On the other hand, we have $2(\eta-\theta)=r^{*}Y$, and therefore $\Var(\eta-\theta) = \frac{(r^{*})^2}{4}\Var(Y)$.

One naturally wonders if we can use the bound we established for $\Var(Y)$ to bound $\Var(\eta+\theta)$. From the definition,
\begin{equation*}
\Var(\eta + \theta) = \Var(\eta)+2\Cov(\eta , \theta)+\Var(\theta)
\end{equation*}
\begin{equation*}
\Var(\eta - \theta) = \Var(\eta) - 2\Cov(\eta , \theta) + \Var(\theta)
\end{equation*}
It suffices to show that $\Cov(\eta , \theta)\leq 0$ to obtain $\Var(\eta + \theta) \leq \Var(\eta - \theta)$. For $\eta$ and $\theta$ to be negatively correlated, an increase in one would have to imply a decrease in the other -- meaning, in our setting, that an increase in the number of edges (i.e. pairs of nodes at distance 1) with ones at both ends would have to imply a decrease in the edges with negative ones at both ends -- and vice versa.

One is tempted to try to use the FKG inequality to prove $\Cov(\eta , \theta)\leq 0$. Specifically, we know that the lattice $\{-1,1\}^{\Gamma}$ (where $\Gamma$ is our graph) is a poset; and that $\eta= f(\{\xi_{i}\})$ is an increasing function of that lattice, while $\theta = g(\{ \xi_{i}\})$ is a decreasing function on the same lattice. Moreover, if we take an element $x$ from $\{-1,1\}^{\Gamma}$, and suppose that $\overline{x}$ is the element we obtain by switching all $-1$'s in $x$ to $+1$'s, and all $+1$'s to $-1$'s, then $f(x)=g(\overline{x})$, and the stationary probability $p_{x}$ of state $x$ occurring in our Markov Chain equals the corresponding probability for state $\overline{x}$ -- that is, $p_{x} = p_{\overline{x}}$.

Now, the FKG theorem (after Fortuin, Kasteleyn, and Ginibre \cite{FKG71}) states that for $\textbf{X}$ a finite distributive lattice, and $\mu$ a non-negative function (really a measure) on it, satisfying the ``log-supermodularity condition'' 
\begin{equation}\label{logsupermod}
\mu(x\wedge y)\mu(x\vee y) \geq \mu(x)\mu(y) \hspace{1cm} \forall x,y\in \textbf{X} 
\end{equation}
yields
\begin{equation*}
\left( \sum_{x\in \textbf{X}} f(x)g(x)\mu(x) \right) \left( \sum_{x \in \textbf{X}} \mu(x) \right) \geq \left( \sum_{x\in \textbf {X}} f(x)\mu(x) \right)\left( \sum_{x\in \textbf {X}} g(x)\mu(x) \right)
\end{equation*}
for any two monotonically increasing (or decreasing) on $\textbf{X}$ functions $f$ and $g$; with the inequality reversed if one of $f$ and $g$ is monotonically increasing and other one monotonically decreasing. 

In our particular case, for the considered lattice, stationary distribution over the lattice, and functions $f$ and $g$, having
\begin{equation*}
\left( \sum_{x\in \{-1,1\}^{\Gamma}} f(x)g(x)p_{x} \right) \left( \sum_{x\in \{-1,1\}^{\Gamma}} p_{x} \right) \leq \left( \sum_{x\in \{-1,1\}^{\Gamma}} f(x)p_{x} \right)\left( \sum_{x\in \{-1,1\}^{\Gamma}} g(x)p_{x} \right)
\end{equation*}
would do the job, since 
\begin{equation*}
\mathbb{E}\eta\theta = \sum_{x\in \{-1,1\}^{\Gamma}} f(x)g(x)p_{x},
\end{equation*}
and 
\begin{equation*}
\mathbb{E}\eta = \sum_{x\in \{-1,1\}^{\Gamma}} f(x)p_{x} = \sum_{x\in \{-1,1\}^{\Gamma}} g(x)p_{x} = \mathbb{E}\theta,
\end{equation*}
and 
\begin{equation*}
\mathbb{E}\eta\theta \leq (\mathbb{E}\eta)(\mathbb{E}\theta)
\end{equation*} 
implies exactly $\Cov(\eta,\theta)\leq 0$.

Unfortunately, our example fails to necessarily satisfy the log-supermodularity condition (\ref{logsupermod}).\footnote{Welsh and Donnelly found that the stationary distribution of the chain underlying the Anti-voted model also fails to satisfy the log-superlinearity condition -- see \cite[Section 5]{DonnellyWelsh1984}.} For example, take a circle graph of odd length. The state in which $1$'s and $-1$'s alternate along the entire graph cannot occur at stationarity, and therefore has measure zero in the stationary distribution of our chain. But the state $w_{1}$ in which we have one $1$ at some node $i$ and everything else is $-1$; and the state $w_{2}$ in which node $i$ and its two neighbors are $-1$'s, and the rest of the graph consists of alternating $1$'s and $-1$'s, can both occur. But then $p_{w_{1}}p_{w_{2}} > 0$, while $p_{w_{1}\wedge w_{2}}=0$, meaning that the log-supermodularity condition fails. One can come up with similar examples for other standard families of graphs. So we fail to have the log-supermodularity condition.

Still, the log-supermodularity condition is only sufficient rather than necessary for our desired result. Hence our results might be obtainable via different means. For now, suppose
\begin{equation*}
\Cov(\eta,\theta) \leq 0.
\end{equation*}

Given (\ref{CovCondition}),
\begin{align*}
\Var \left( \sum_{i\in I}i^{2}q_{i} \right) &= \Var \left( 2(\alpha - \beta) \right) = 4\Var \left( 2(\eta + \theta) \right) \leq \\
&\leq 4 \Var \left( 2(\eta - \theta) \right) = 4(r^{*})^{2}\Var(Y) \leq 4(r^{*})^{2} (r+1)N.
\end{align*}
It follows that
\begin{equation*}
\Var[\mathbb{E}(Y'-Y)^{2} | Y] \leq \frac{4 (r^{*})^{2}(r+1)}{N}
\end{equation*}

We thus arrive at the overall bound (\ref{BigBound}):
\begin{align*}
\delta &\leq \frac{12N}{(r+1)\sigma_{Y}^{2}}\sqrt{\Var \mathbb{E}[(Y'-Y)^{2}|Y]} + 32\frac{8(r+1)^{2}N}{\sigma_{Y}^{3}}  + 6\frac{4(r+1)^{3/2}\sqrt{N}}{\sigma_{Y}^{2}} \leq \\
& \leq 48\frac{r^{*}}{\sqrt{r+1}\sqrt{N}} + 2^{19/2}\frac{\sqrt{r+1}}{\sqrt{N}} + 48\frac{\sqrt{r+1}}{\sqrt{N}}
\end{align*}
Thus our overall bound is:
\begin{equation}\label{finalBound}
\delta \leq 48 \frac{r^{*}}{\sqrt{r+1}\sqrt{N}} + (2^{19/2}+48)\frac{\sqrt{r+1}}{\sqrt{N}},
\end{equation}
where $r^{*}$ is a constant dependent on the underlying family of graphs and satisfying $r^{*}\leq r^{2}$.

This completes the proof of Theorem \ref{MainTheorem}, and derives (\ref{BoundFINAL}). The final bound is of $O \left( \frac{r^{*}}{\sqrt{r}}N^{-\frac{1}{2}} \right)$. 

\section{Consequences and explanation of main result}\label{SectionConsequences}

The bound in (\ref{finalBound}) implies that (under stationarity) the normalized sum of values of the nodes of the graph, $W$, goes in law to the standard normal distribution as the size of the graph rises given $\frac{(r^{*})^{2}}{rN}\rightarrow 0$. Note that $r\leq r^{*}\leq r^{2}$.

Let us consider four specific families of graphs.

First, the complete graph, in which $r=N-1$. On the complete graph, $Y = \sigma_{Y}W$ clearly has the uniform binary distribution taking values $\pm N$. Thus it is to no surprise that our bound on the distance to the normal distribution rises to infinity with $N$.

From the other side of the spectrum of regular graphs, we can take the circuit (or circle or simple cycle) graph, in which we have $N$ ordered nodes, each connected to its predecessor and its successor, with node $N$ connected to nodes $N-1$ and $1$. Here $r=2$, and hence $r^{3/2}/N^{1/2}$ goes to 0 as $N$ increases to infinity.

The argument can be extended to circulant\footnote{ A circulant graph is such that we can arbitrarily index its nodes with 0,1,...,$N-1$, in such a way that if the nodes corresponding to two indices $x$ and $y$ are adjacent, then any two nodes indexed by $z$ and $(z-x+y) \mod N$ are adjacent. Here $N$ is the number of nodes and adjacency of two nodes means they are connected by an undirected edge. } graphs: as long as $r$ stays constant as $N$ rises, $Y$ would converge to the normal in distribution.

For a slightly more complicated example, consider the hypercube graph. One can index the nodes of the $n$-dimensional hypercube graph with a string of $n$ zeros and ones, with nodes differing in exactly one digit being neighbors.

It is easy to see that for an $n$-dimensional hypercube, $r=n$, and $N=2^{n}$. Since 
\begin{equation*}
\frac{r^{3/2}}{N^{1/2}} = \frac{n^{3/2}}{2^{n/2}} \xrightarrow[n\rightarrow\infty]{} 0,
\end{equation*}
we can conclude that $W$ goes in law to the standard normal distribution for the hypercube family of graphs.

Finally, consider the (complete) bipartite graph of size $N=2M$, with $M$ a natural number. For this family, $r=M$, and $N=2M$. On such a graph, $Y$ would frequently take values near $-N,0$ and $N$, and hence cannot be expected to go to the normal in distribution. Indeed, we have 
\begin{equation*}
\frac{r^{3/2}}{N^{1/2}} = \frac{M^{3/2}}{(2M)^{1/2}} \xrightarrow[M\rightarrow\infty]{} \infty.
\end{equation*}
The argument can clearly extend to multipartite graphs of a fixed number of partitions.

\section{Conclusions}\label{SectionConclusions}

To sum up, we have shown that, subject to some symmetry assumptions, the normalized sum of the values of the nodes in the Neighborhood Attack model is at a distance of $O \left( r^{3/2} N^{-\frac{1}{2}} \right)$ to the standard normal distribution in the Wasserstein metric. Hence the sum of the nodes is asymptotically normally distributed as the sizes of the underlying graphs increase, provided that $r^{3/2}N^{-1/2}$ goes to zero as $N$ rises to infinity.

Along the way to the result, we also showed that the node-sum $Y$ in the Neighborhood Attack model on an $r$-regular graph satisfies Stein's linearity condition with $\lambda = \frac{r+1}{N}$ and $R=0$; and that $\sigma_{Y}$ satisfies $\frac{(r+1)N}{2} \leq \sigma_{Y}^{2} \leq (r+1)N.$


\ack
This work is based on the author's 2008-2013 graduate research at the University of Southern California under the advisorship of Prof.  Jason Fulman. The author would like to express his gratitude to Prof. Fulman and the USC Department of Mathematics.

\bibliographystyle{apt}

\end{document}